\numberwithin{equation}{section}
\newtheorem{thm}{Theorem}[section]
\newtheorem{lem}[thm]{Lemma}
\newtheorem{prop}[thm]{Proposition}
\newtheorem{dfn}[thm]{Definition}
\theoremstyle{remark}
\newtheorem{rem}{Remark}[section]
\newcommand{\pp}[1]{\frac{\partial}{\partial {#1}}}
\title{Entropy bounds for self-shrinkers with symmetries}
\author{John Man Shun Ma, Ali Muhammad}
\date{\today}
\begin{document}

\thanks{The authors were supported by the Danish National Research Foundation  CPH-GEOTOP-DNRF151.}


\begin{abstract}
In this work we derive explicit entropy bounds for two classes of closed self-shrinkers: the class of embedded closed self-shrinkers recently constructed in \cite{riedler} using isoparametric foliations of spheres, and the class of compact non-spherical immersed rotationally symmetric self-shrinkers. These bounds generalize the entropy bounds found in \cite{MMM} on the space of complete embedded rotationally symmetric self-shrinkers.
\end{abstract}

\maketitle

\section{Introduction}
A smooth hypersurface $\Sigma$ in $\mathbb R^{n+1}$ is called a {\sl self-shrinker} if 
\begin{equation}
H_{\Sigma}(x) = \frac{1}{2} \langle x, \nu\rangle, \ \ \text{ for all } x\in \Sigma.
\end{equation}
Here $\nu$ is the unit normal vector field on $\Sigma$, $H_\Sigma = \operatorname{div} \nu$ is the mean curvature of $\Sigma$ and $x$ is the position vector. Self-shrinkers are central objects of study in the mean curvature flow (MCF) since they serve as singularity models \cite{Huisken}, \cite{Ilmanen}. Besides the generalized cylinders, different techniques have been used to construct explicit examples of self-shrinkers \cite{AL}, \cite{ang}, \cite{KKM}, \cite{BNS}, \cite{Nguyen}, \cite{DN}, \cite{KMc}, \cite{riedler}.

In the seminal paper \cite{CM1} (see also \cite{MM}), Colding and Minicozzi defined the {\sl entropy} $\lambda(\Sigma)$ of any smooth hypersurface $\Sigma$. The entropy is a scaling and translation invariant quantity. More importantly, if $\{ \Sigma_t\}$ is a solution to the MCF, then $t\mapsto \lambda (\Sigma_t)$ is non-increasing and is constant if and only if (up to rescaling and space-time translation) $\Sigma_t = \sqrt{-t} \Sigma$ for $t \in (-\infty, 0)$, where $\Sigma$ is a self-shrinker. 

The entropy of the generalized cylinders were computed in \cite{Stone}, and the entropy of the Angenent doughnuts were numerically computed in \cite{yakov}, \cite{HN}. In general, upper bounds for the entropy are of great interest, since they normally imply surprising topological and geometrical consequences \cite{AlexGT}, \cite{BW}, \cite{CHH}, \cite{MMM}. 

In \cite{MMM}, the authors and M{\o}ller proved an entropy upper bound for complete embedded rotationally symmetric self-shrinkers in $\mathbb R^{n+1}$. Using that, several compactness and finiteness results for the space of such self-shrinkers were obtained. The goal of this paper is to generalize the entropy upper bounds obtained in \cite[Theorem 1.1]{MMM} in two directions. 

Using the theory of isoparametric foliations of spheres, Riedler constructed in \cite{riedler} new examples of closed embedded self-shrinkers in $\mathbb R^{n+1}$ with topology $\mathbb S^1 \times M$, where $M \subset \mathbb S^{n}$ is diffeomorphic to an isoparametric hypersurface of $\mathbb{S}^n$ for which the multiplicities of the principal curvatures agree. This is done by applying a reduction theorem from \cite{PT} to reduce the self-shrinker equation to a geodesic equation in an open subset of $\mathbb{R}^2$. Indeed, the construction generalizes the classical work by Angenent \cite{ang} for rotationally symmetric self-shrinkers. Our first result is the following theorem which provides entropy bounds for this class of self-shrinkers.

\begin{thm} \label{thm entropy upper bounds for f-invariant self-shrinkers}
Let $\mathscr M = \{ M_\varphi\}_{\varphi \in (0,\pi/g)}$ be an isoparametric foliation of $\mathbb S^n$ of type $(g, m,m)$, and let $N$ be a $f$-invariant embedded closed self-shrinker diffeomorphic to $\mathbb S^1 \times M$, where $M$ is diffeomorphic to a regular fiber of the foliation $\mathscr M $. Then there is a positive number $E_{g,m}$ such that $\lambda (N)\le E_{g,m}$. 
\end{thm}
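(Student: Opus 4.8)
The plan is to bound the entropy by relating it to the Gaussian-weighted area (the $F$-functional at the origin), which for self-shrinkers realizes the entropy itself. Recall that for a self-shrinker $N$, Colding–Minicozzi showed that $\lambda(N) = F(N) = (4\pi)^{-(n+1)/2} \int_N e^{-|x|^2/4} \, d\mu$, since the entropy supremum over all scales and translations is attained at the identity scale centered at the origin. So the entire task reduces to producing a universal upper bound, depending only on $g$ and $m$, for this weighted area integral over any $f$-invariant closed self-shrinker diffeomorphic to $\mathbb{S}^1 \times M$.

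The key structural input is the reduction from the paper of Riedler \cite{riedler}, following \cite{PT}: an $f$-invariant hypersurface $N \subset \mathbb{R}^{n+1}$ corresponds to a curve $\gamma$ in the two-dimensional orbit space, parametrized by coordinates $(r,\varphi)$ or equivalently by the profile in a half-plane, and the full hypersurface is recovered by sweeping the regular fiber $M_\varphi \subset \mathbb{S}^n$ of the isoparametric foliation along $\gamma$. First I would write the weighted area of $N$ as an iterated integral: the Gaussian factor $e^{-|x|^2/4}$ depends only on the radial coordinate $r = |x|$, which is constant along each fiber, so the integral factors as a one-dimensional integral over $\gamma$ of $e^{-r^2/4}$ times the induced volume, weighted by the volume of the sliced fiber $M_\varphi$. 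The fiber volumes are controlled: isoparametric foliations of type $(g,m,m)$ have fibers whose volumes are bounded in terms of $g$ and $m$ alone (they are orbits/level sets of Cartan–Münzner polynomials with explicitly computable volume as a function of $\varphi$), so the $\varphi$-dependence contributes only a bounded constant $C_{g,m}$.

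The heart of the estimate is then the one-dimensional weighted integral along the profile curve $\gamma$. Here I would exploit the self-shrinker equation itself, which for the reduced curve becomes a geodesic equation in the orbit space with respect to a conformally Gaussian-weighted metric. The standard trick, going back to \cite{ang} and used in \cite{MMM}, is that along such a curve the arclength-weighted integral $\int_\gamma e^{-r^2/4} \, ds$ can be bounded by controlling how far $\gamma$ travels and by using the monotonicity/convexity properties of the weight. Concretely, I would parametrize by the radial coordinate where possible, split $\gamma$ into arcs where $r$ is monotone, and on each arc estimate $\int e^{-r^2/4} \, ds \le \int e^{-r^2/4} \sqrt{1 + (d\varphi/dr)^2 \cdot (\text{metric factor})}\, dr$, bounding the slope term using the geodesic equation and the compactness of the closed curve. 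The embeddedness and closedness of $N$ guarantee $\gamma$ is a closed embedded curve in the orbit half-strip, which bounds the number of monotone arcs in terms of $g$ (the combinatorics of the foliation), completing the passage to a bound $E_{g,m}$ depending only on $g$ and $m$.

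The main obstacle I anticipate is controlling the number and geometry of the monotone arcs of $\gamma$ uniformly over the whole family of such self-shrinkers: a priori a closed geodesic in the orbit space could oscillate, picking up many arcs and accumulating weighted length, so the crux is a topological or curvature argument showing that an embedded self-shrinking profile curve of the prescribed type $\mathbb{S}^1 \times M$ can cross each level of $r$ and each boundary stratum only boundedly many times, with the bound expressed through $g$ and $m$. I expect this to follow from the maximum principle applied to the self-shrinker equation (bounding $r$ on $N$ a priori, as the weight decay forces the curve to stay in a compact region whose size depends on the dimension data) together with the embeddedness forcing the winding behavior of $\gamma$ to be controlled, paralleling the argument in \cite[Theorem 1.1]{MMM} but now carried out on the isoparametric orbit space rather than the classical upper half-plane.
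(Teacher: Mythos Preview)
Your setup is correct: the reduction to a profile curve $C=f(N)$ in the orbit strip, the identity $\lambda(N)=F_{0,1}(N)$, and the factoring of the weighted area through the fiber volumes are all exactly as in the paper. (Minor slip: the normalizing power is $(4\pi)^{-n/2}$, since $N$ is $n$-dimensional.) Where your proposal diverges, and where the gap lies, is in the mechanism you use to bound the resulting one-dimensional integral.

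You propose to split $C$ into $r$-monotone arcs, bound the slope $d\varphi/dr$ using the geodesic equation, and then bound the number of such arcs by a maximum-principle-plus-embeddedness argument. The difficulty you flag at the end is real and is not resolved by what you outline: an embedded closed curve in a strip, even a geodesic for some metric, can have arbitrarily many $r$-monotone pieces, and neither the a priori radius bound from the self-shrinker equation nor embeddedness alone caps that count. There is no combinatorics of the foliation that fixes the number of turning points. So as written the argument does not close.

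The paper's route avoids this entirely. The crucial observation, which your proposal misses, is that the weighted integral you are trying to bound is \emph{literally} the length $L_h(C)$ of $C$ in the Riemannian metric $h$ of \eqref{eqn definition of h expression}; this is the content of \eqref{eqn length of C equal volume}. One then computes that the Gaussian curvature of $h$ is bounded below by an explicit positive constant $\kappa_{g,m}$ depending only on $(g,m)$. Since $N\cong\mathbb S^1\times M$ forces $C$ to be an embedded closed geodesic in a simply connected surface of curvature $\ge\kappa_{g,m}$, the comparison theorem \cite[Theorem~3.1]{MMM} yields the clean bound $L_h(C)\le 2\pi/\sqrt{\kappa_{g,m}}$ in one stroke, with no arc-counting needed. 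That is the missing idea.
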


We remark that the constants $E_{g,m}$ are explicit and depend only on the pair $(g, m)$. We refer the reader to section 3 for the terminology on isoparametric foliations appearing in Theorem \ref{thm entropy upper bounds for f-invariant self-shrinkers}.

Our second result is Theorem \ref{thm entropy bounds for immersed rotationally symmetric self-shrinkers} below which gives entropy bounds for immersed rotationally symmetric self-shrinkers diffeomorphic to $\mathbb S^1 \times \mathbb S^{n-1}$. Drugan and Kleene constructed in \cite{DK} infinitely many complete immersed rotationally symmetric self-shrinkers of various topological types. In the following theorem, we obtain an entropy upper bound for compact immersed non-spherical rotationally symmetric self-shrinkers in terms of the number of self-intersections of the corresponding profile curve. For each $n\ge 2$, let $E_n$ denote the entropy upper bound for complete embedded rotationally symmetric self-shrinkers in $\mathbb R^{n+1}$ obtained in \cite[equation (3.1)]{MMM}. 

\begin{thm} \label{thm entropy bounds for immersed rotationally symmetric self-shrinkers}
Let $\Sigma = \Sigma_\sigma$ be a compact immersed non-spherical rotationally symmetric self-shrinker in $\mathbb R^{n+1}$, where the profile curve $\sigma$ has $k$ self-intersection points counted with multiplicity. Then 
\begin{equation} \label{eqn entropy bound for immersed rotationally symmetric self-shrinkers}
    \lambda(\Sigma) \le (k+1) E_{n}. 
\end{equation}
\end{thm}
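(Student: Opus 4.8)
\section*{Proof proposal for Theorem \ref{thm entropy bounds for immersed rotationally symmetric self-shrinkers}}

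The plan is to reduce the entropy to a one-dimensional weighted length of the profile curve, decompose that curve at its self-intersections into at most $k+1$ embedded loops, and bound the contribution of each loop by $E_n$. For the first step, recall that since $\Sigma$ is a self-shrinker the supremum defining the Colding--Minicozzi entropy is attained at the origin at scale one, so that $\lambda(\Sigma)=F_{0,1}(\Sigma)$ computed with multiplicity over the immersion. Writing the profile curve as $\sigma(s)=(x(s),r(s))$ in the half-plane $\{r>0\}$ and rotating, the area element factors and I obtain
\[
\lambda(\Sigma)=\frac{\omega_{n-1}}{(4\pi)^{n/2}}\int_{\sigma} w(\sigma(s))\,ds,
\qquad w(x,r)=r^{n-1}e^{-(x^2+r^2)/4},
\]
where $\omega_{n-1}=\mathcal H^{n-1}(\mathbb S^{n-1})$ and the integral runs over the (possibly circular) domain of $\sigma$, counting multiplicity. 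Thus $\lambda(\Sigma)$ equals $c_n=\omega_{n-1}(4\pi)^{-n/2}$ times the $w$-weighted length of $\sigma$, and the entire problem becomes a statement about a planar curve.

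Next I would carry out the combinatorial decomposition. Self-intersections of the immersed hypersurface $\Sigma$ correspond exactly to self-intersections of $\sigma$, so $\sigma$ is an immersed loop (or arc) with $k$ transverse self-crossings counted with multiplicity. Starting from the single loop and performing a coherent, orientation-respecting smoothing at each of the $k$ crossings one at a time, each smoothing changes the number of connected components by exactly $\pm 1$; hence after resolving all $k$ crossings I obtain a family $\gamma_1,\dots,\gamma_c$ of pairwise disjoint \emph{embedded} loops with $c\le k+1$. Because $w$ depends only on the position $(x,r)$ and the smoothings merely reconnect the very same arcs of $\sigma$, the weighted length is preserved exactly, giving
\[
\lambda(\Sigma)=c_n\sum_{i=1}^{c}\int_{\gamma_i} w\,ds,\qquad c\le k+1 .
\]
It then suffices to prove $c_n\int_{\gamma_i}w\,ds\le E_n$ for each resolved loop $\gamma_i$.

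The last step is to invoke the embedded estimate of \cite{MMM}: each $\gamma_i$ is an embedded closed curve confined to the same compact region of $\{r>0\}$ that contains $\sigma$, and \cite[equation (3.1)]{MMM} bounds the $w$-weighted length of such an embedded profile by $E_n/c_n$. Summing over the at most $k+1$ loops then yields $\lambda(\Sigma)\le (k+1)E_n$, which is \eqref{eqn entropy bound for immersed rotationally symmetric self-shrinkers}.

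The main obstacle is precisely this last step, because the resolved loops $\gamma_i$ are \emph{not} themselves self-shrinkers: the oriented smoothings introduce corners at the former crossing points, so the bound $E_n$ cannot be applied verbatim as a statement about embedded rotationally symmetric self-shrinkers. I expect to overcome this by revisiting the derivation of $E_n$ in \cite{MMM}, which proceeds by splitting an embedded profile into graphical (monotone) arcs over the axis and estimating each arc by a curve-independent Gaussian integral that uses only embeddedness and confinement rather than the full self-shrinker ODE. The key technical points to verify are (i) that each $\gamma_i$ inherits this graphical/confinement structure, so that the per-arc estimate behind $E_n$ applies to it unchanged, and (ii) that the bookkeeping of graphical arcs and crossings is consistent with the count $c\le k+1$, so that corners created by the smoothings do not produce additional length beyond what $E_n$ controls. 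Making this transfer of the estimate rigorous, rather than the combinatorial decomposition, is where the essential work lies.
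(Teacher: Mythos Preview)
Your reduction of $\lambda(\Sigma)$ to the $w$-weighted length of the profile curve is correct and matches the paper's starting point. The gap lies in your plan for the last step, and it stems from a misreading of how $E_n$ is obtained in \cite{MMM}. The bound $E_n$ is \emph{not} produced by splitting an embedded profile into graphical arcs and estimating each by a Gaussian integral; it comes from comparison geometry. The key observation (see \cite[Proposition~2.3]{MMM} and Section~\ref{section isoparametric foliation} here) is that the weighted length of $\sigma$ equals its Riemannian length with respect to the Angenent metric $g_A = r^{2(n-1)}e^{-(x^2+r^2)/2}(dx^2+dr^2)$ on the upper half-plane, that $\Sigma_\sigma$ is a self-shrinker precisely when $\sigma$ is a $g_A$-geodesic, and that $(\mathbb H, g_A)$ has Gaussian curvature bounded below by a positive constant $\kappa_n$. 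The embedded case then follows from the comparison inequality $L(\sigma)\le 2\pi/\sqrt{\kappa_n}$ for a simple closed geodesic in a simply connected surface of curvature $\ge\kappa_n$. Since this argument uses the geodesic equation for $\sigma$ in an essential way, it does not transfer to your resolved loops $\gamma_i$, which are only piecewise-geodesic with corners, and your proposed rescue via ``graphical/confinement structure'' is aimed at the wrong mechanism.

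The paper fixes exactly this by working with the $k+1$ \emph{domains} enclosed by $\sigma$ rather than with smoothed loops: an Euler-characteristic count (Proposition~\ref{prop gamma bounds k+1 domains}) shows there are precisely $k+1$ such domains, and a short convexity argument (Proposition~\ref{prop interior angles < pi}) shows every corner of a domain boundary has interior angle $<\pi$. One then proves that each domain, with its intrinsic metric, is a complete length space of curvature $\ge\kappa_n$ (Proposition~\ref{prop Omega domega has curvature ge kappa}), so the comparison bound $|\partial\Omega_i|\le 2\pi/\sqrt{\kappa_n}$ extends to these piecewise-geodesic boundaries (Theorem~\ref{thm length bound on boundary of Omega}); summing gives Theorem~\ref{thm length bounds for immersed loop with k self intersections} and hence the entropy bound. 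Your oriented-smoothing decomposition could in principle be made to work, but it would require the same piecewise-geodesic comparison theorem together with a verification that the reconnections produce convex corners---at which point you have reproduced the paper's Section~\ref{section length bound on immersed curves} rather than bypassed it.
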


We refer the reader to section 4 and 5 for the terminology on rotationally symmetric self-shrinkers appearing in Theorem \ref{thm entropy bounds for immersed rotationally symmetric self-shrinkers}. Both Theorem \ref{thm entropy upper bounds for f-invariant self-shrinkers} and Theorem \ref{thm entropy bounds for immersed rotationally symmetric self-shrinkers} are proven using comparison theorems (\cite[Theorem 3.1]{MMM} and Theorem \ref{thm length bounds for immersed loop with k self intersections} respectively), which provide estimates for the length of closed geodesics $\sigma$ in a simply connected Riemannian surface with strictly positive Gaussian curvature. Theorem \ref{thm length bounds for immersed loop with k self intersections} also implies an entropy bound for a class of closed immersed $f$-invariant self-shrinkers (see remark \ref{remark immersed f-ivariant shrinkers}).



The organization of this paper is as follows. In section \ref{section intro to MCF}, we recall the definitions of self-shrinkers and the entropy of a hypersurface. In section \ref{section isoparametric foliation}, we recall the construction in \cite{riedler} and prove Theorem \ref{thm entropy upper bounds for f-invariant self-shrinkers}. In section \ref{section length bound on immersed curves}, we prove a general comparison theorem (Theorem \ref{thm length bounds for immersed loop with k self intersections}) and in the last section, we prove Theorem \ref{thm entropy bounds for immersed rotationally symmetric self-shrinkers}. 

The first author would like to thank Hojoo Lee for their discussion on immersed rotationally symmetric self-shrinkers. The authors would also like to thank Alexander Mramor for fruitful discussions. 

\section{Self-shrinkers and the entropy} \label{section intro to MCF}
Let $\Sigma$ be a properly immersed smooth hypersurface in $\mathbb R^{n+1}$. For any $x_0 \in \mathbb R^{n+1}$ and $t_0 >0$, the functional $F_{x_0, t_0}$ is defined in \cite{CM1} as 
\begin{equation}
F_{x_0, t_0} (\Sigma) = \frac{1}{(4\pi t_0)^{n/2}} \int_\Sigma e^{-\frac{\|x-x_0\|^2}{4t_0}} d\mu, 
\end{equation}
Where $d\mu$ is the volume form of $\Sigma$. It is well known (e.g. \cite[section 1]{CM2}) that the following statements are equivalent: 
\begin{itemize}
\item $\Sigma$ is a self-shrinker;
\item $\Sigma$ is a minimal hypersurface in $\mathbb R^{n+1}$ with respect to the conformal (Gau{\ss}ian) metric $g_{\text{\ss}}$, where 
\begin{equation} \label{eqn dfn of g_ang}
(g_{\text{\ss}})_{ij} = e^{-\frac{\|x\|^2}{2n}}\delta_{ij}
\end{equation}
and $\delta$ denotes the Euclidean metric on $\mathbb R^{n+1}$;
\item $\Sigma$ is a critical point of the $F$-functional $F = F_{0,1}$. 
\end{itemize}

The {\sl entropy} of a hypersurface $\Sigma$ is defined by (\cite{CM1}, \cite{MM})
\begin{equation}
\lambda (\Sigma) = \sup_{(x_0,t_0) \in \mathbb{R}^{n+1} \times \mathbb{R}_{>0}} F_{x_0, t_0} (\Sigma).
\end{equation}
We also need the following fact about the entropy of self-shrinkers \cite[section 7.2]{CM1}. 

\begin{lem} \label{lem lambda equals F 01}
Let $\Sigma$ be a properly immersed self-shrinker in $\mathbb R^{n+1}$. Then $\lambda (\Sigma) = F_{0,1} (\Sigma)$. 
\end{lem}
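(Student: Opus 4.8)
The plan is to prove the two inequalities $F_{0,1}(\Sigma) \le \lambda(\Sigma)$ and $\lambda(\Sigma) \le F_{0,1}(\Sigma)$ separately. The first is immediate from the definition of $\lambda$ as a supremum, since $(0,1) \in \mathbb{R}^{n+1}\times \mathbb{R}_{>0}$. The whole content therefore lies in the reverse inequality, namely that $F_{x_0,t_0}(\Sigma) \le F_{0,1}(\Sigma)$ for every $(x_0,t_0) \in \mathbb{R}^{n+1}\times\mathbb{R}_{>0}$. My strategy is to exploit the self-similar mean curvature flow generated by $\Sigma$, together with Huisken's monotonicity formula, so as to recognize each value $F_{x_0,t_0}(\Sigma)$ as a Gaussian density of this flow and then to compare it with its limiting value as time tends to $-\infty$.

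Concretely, since $\Sigma$ is a self-shrinker the rescalings $\Sigma_s = \sqrt{-s}\,\Sigma$, $s \in (-\infty,0)$, form a mean curvature flow. For a backward heat kernel centered at $(y_0,s_0)$,
\[
\Phi_{y_0,s_0}(x,s) = \frac{1}{(4\pi(s_0-s))^{n/2}} \, e^{-\frac{\|x-y_0\|^2}{4(s_0-s)}}, \qquad s < s_0,
\]
Huisken's monotonicity formula asserts that $s \mapsto \int_{\Sigma_s} \Phi_{y_0,s_0}(\cdot,s)\,d\mu_s$ is non-increasing on $(-\infty,s_0)$. The key bookkeeping observation is that matching the width of the Gaussian gives
\[
F_{x_0,t_0}(\Sigma) = \int_{\Sigma_{-1}} \Phi_{x_0,\,t_0-1}(\cdot,-1)\, d\mu_{-1},
\]
because evaluating $\Phi_{x_0,\,t_0-1}$ at $s=-1$ reproduces exactly the weight $(4\pi t_0)^{-n/2} e^{-\|x-x_0\|^2/(4t_0)}$ on $\Sigma_{-1}=\Sigma$. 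Thus each $F$-value is realized as the value at time $s=-1$ of a monotone quantity.

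Applying monotonicity on the interval $(-\infty,-1]$ then yields
\[
F_{x_0,t_0}(\Sigma) \le \lim_{s\to -\infty} \int_{\Sigma_s} \Phi_{x_0,\,t_0-1}(\cdot,s)\, d\mu_s,
\]
so it remains to identify this limit with $F_{0,1}(\Sigma)$. I would do this by the change of variables $x=\sqrt{-s}\,y$ with $y\in\Sigma$ and $d\mu_s = (-s)^{n/2}\,d\mu$, after which the prefactor tends to $(4\pi)^{-n/2}$ and the exponent tends pointwise to $\|y\|^2/4$, the contributions of the shift $x_0$ and of the time offset $t_0-1$ washing out as $-s\to\infty$; the integrand then converges to the Gaussian weight defining $F_{0,1}$. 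Taking the supremum over $(x_0,t_0)$ finishes the proof.

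The main obstacle I anticipate is analytic rather than conceptual: justifying that Huisken's monotonicity applies to the (possibly non-compact) properly immersed self-shrinker and, above all, interchanging the limit $s\to-\infty$ with the integral in the last step. For this I would invoke properness together with the polynomial volume growth of self-shrinkers to produce an $s$-independent integrable dominating function and apply dominated convergence; in the compact case relevant to the present paper these issues are automatic.
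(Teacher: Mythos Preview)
The paper does not supply its own proof of this lemma; it simply cites \cite[section 7.2]{CM1} as the source of the result. Your argument via the self-similar flow $\Sigma_s=\sqrt{-s}\,\Sigma$ and Huisken's monotonicity formula is exactly the standard proof found there, so you have correctly reconstructed the cited argument.
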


\section{Closed embedded self-shrinkers constructed by isoparametric foliations} \label{section isoparametric foliation}
In this section we prove Theorem \ref{thm entropy upper bounds for f-invariant self-shrinkers}. First, we recall the construction of the closed embedded self-shrinkers in \cite{riedler} using the theory of isoparametric foliations of spheres, which generalizes the classical construction of rotationally symmetric self-shrinkers by Angenent \cite{ang}. The definition of isoparametric foliations and the properties of such foliations of spheres, in particular, the structural theorems proved by M{\" u}nzner in \cite{munzner}, \cite{munzner2}, are neatly summarized in \cite[section 2]{riedler} and we closely follow their notation. For a general introduction, see for example \cite[Chapter 3]{CR}. We start by briefly recalling the necessary background on the theory. We then proceed to describe how the problem of finding self-shrinkers in the setting of \cite{riedler} is reduced to finding geodesics in a certain open subset of $\mathbb{R}^2$ equipped with a family of (incomplete) metrics $h_{g,m}=h$ with Gaussian curvature bounded from below by strictly positive numbers.

\subsection{Background on Isoparametric Foliations of $\mathbb{S}^n$} A smooth function $F: \mathbb{S}^n \to \mathbb{R}$ is called {\sl isoparametric} if there exist smooth functions $\phi_1, \phi_2 : F(\mathbb{S}^n) \to  \mathbb{R}$ such that 
\begin{align*}
    \|\text{grad} F\|^2 = \phi_1 \circ F, \quad \Delta F = \phi_2 \circ F,
\end{align*}
where $\text{grad} F$ and $\Delta F$ denote the gradient and the Laplacian of $F$, respectively. Foliations of $\mathbb{S}^n$ that arise from a family of level sets of an isoparametric function are called {\sl isoparametric foliations} of $\mathbb{S}^n$. Such level sets are then the fibers of a given foliation. By the work of M{\" u}nzner \cite{munzner}, the isoparametric function of a given foliation is a restriction to $\mathbb{S}^n$ of a homogeneous polynomial $F: \mathbb{R}^{n+1} \to \mathbb{R}$ of degree $g$ which satisfies certain differential equations, where $g\in \{1, 2, 3, 4, 6\}$. Such a polynomial $F$ is called the {\sl Cartan-M{\" u}nzner polynomial}. The foliation has exactly two singular fibers $V_\pm = (F|_{\mathbb S^n})^{-1} (\pm 1)$, and each regular fiber is given by 
\begin{equation}\label{definition regular fiber}
    M_\varphi = \{ x\in \mathbb S^n | \operatorname{dist}_0(x, V_-) = \varphi\},
\end{equation}
where $\varphi \in (0,\pi/g)$ and $\operatorname{dist}_0$ is the distance function on the round $\mathbb{S}^n$. On each $M_\varphi$, there are $g$ (the degree of the Cartan-M{\" u}nzner polynomial $F$) distinct principal curvatures, and they each assume a constant value on a given regular fiber. Furthermore, there are at most two distinct multiplicities for the principal curvatures which we denote by $m_1$ and $m_2$, and we have the following relation $n-1 = \frac{g}{2}(m_1+m_2)$. Also, for each $\varphi \in (0, \pi/g)$ we have:
\begin{equation}\label{regular fiber volume}
    \operatorname{Vol}_0 (M_\varphi) = c \sin \left(\frac{g}{2} \varphi \right)^{m_1} \cos \left(\frac{g}{2} \varphi \right)^{m_2},
\end{equation}
where $c$ is some constant independent of $\varphi$ and $\operatorname{Vol}_0 (M_\varphi)$ is the volume of $M_\varphi$ with respect to the round metric on $\mathbb{S}^n$.

We shall call an isoparametric foliation $\mathscr M = \{ M_\varphi\}$ of type $(g,m_1,m_2)$ if the fibers have $g$ distinct principal curvatures with corresponding multiplicities $m_1$ and $m_2$. In the following we shall restrict to the case where the multiplicities agree, i.e. $m_1=m_2$ and hence consider foliations of type $(g,m,m)$. In this case we have $n=mg+1$ and we shall use $n$ and $m$ interchangeably through this relation.


In the next lemma we determine the number $c$ in \eqref{regular fiber volume} in terms of $g$ and $m$. We start by recalling the following useful result: let $\pi : (E, g_E)\to (B, g_B)$ be a surjective Riemannian submersion between two Riemannian manifolds with compact fibers. 
Let $v : B \to \mathbb R$ be $v(x) = \operatorname{Vol}_{g_E} (\pi^{-1}(x))$. By the co-area formula \cite[Theorem 2.1]{Nicolaescu}, for all submanifolds $M$ of $B$ we have

\begin{equation} \label{eqn vol of preimage of M}
\operatorname{Vol}_{g_E} (\widetilde M) = \int_M v(x) d\mu_M (x),
\end{equation}
where $\widetilde M = \pi^{-1}(M)$ and $d\mu_M$ is the volume form of $M$ in $(B, g_B)$. 

\begin{lem} \label{lem volume of M_varphi}
Given an isoparametric foliation $\{ M_\varphi\}$ of $\mathbb S^n$ of type $(g, m, m)$, we have 
\begin{equation} \label{eqn volume of regular fiber of an isoparametric foliation}
\operatorname{Vol}_0(M_\varphi) = \frac{g\omega_n }{s(m)} \sin^m (g\varphi)
\end{equation}
for every regular fiber $M_\varphi$, $\varphi \in (0, \pi/g)$. Here $n = mg+1$,  $\omega_n$ is the volume of $\mathbb S^n$ and 
\begin{equation} \label{eqn dfn of s(m)}
s(m) = \int_0^\pi \sin ^m t \; \mathrm{d}t .
\end{equation}
\end{lem}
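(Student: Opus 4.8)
The plan is to pin down the constant $c$ in \eqref{regular fiber volume} by integrating the fiber volumes across the whole sphere. Since we are in the case $m_1 = m_2 = m$, I would first simplify the right-hand side of \eqref{regular fiber volume} with the double-angle identity $\sin(\tfrac{g}{2}\varphi)\cos(\tfrac{g}{2}\varphi) = \tfrac12\sin(g\varphi)$, which rewrites it as $\operatorname{Vol}_0(M_\varphi) = (c/2^m)\sin^m(g\varphi)$. It then only remains to identify $c/2^m$ with $g\omega_n/s(m)$.

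To recover $c$, I would apply the coarea formula \eqref{eqn vol of preimage of M} to the foliation map itself. Let $\varphi = \operatorname{dist}_0(\cdot, V_-) : \mathbb{S}^n \setminus (V_+ \cup V_-) \to (0,\pi/g)$ be the distance function to the singular fiber $V_-$. Away from the two singular fibers this function is smooth, and, being a Riemannian distance function whose level sets are the parallel hypersurfaces $M_\varphi$, it satisfies $\|\operatorname{grad}\varphi\| = 1$. Consequently $\varphi$ is a surjective Riemannian submersion onto $(0,\pi/g)$ with its standard metric, the fiber over a value $\varphi$ is exactly the compact regular fiber $M_\varphi$, and so in the notation of \eqref{eqn vol of preimage of M} we have $v(\varphi) = \operatorname{Vol}_0(M_\varphi)$.

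Taking $M = B = (0,\pi/g)$ in \eqref{eqn vol of preimage of M}, and using that the singular fibers $V_\pm$ have $n$-dimensional measure zero in $\mathbb{S}^n$, I obtain
\[
\omega_n = \operatorname{Vol}_0(\mathbb{S}^n) = \int_0^{\pi/g} \operatorname{Vol}_0(M_\varphi)\, d\varphi = \frac{c}{2^m}\int_0^{\pi/g}\sin^m(g\varphi)\,d\varphi.
\]
The substitution $u = g\varphi$ converts the last integral into $\tfrac1g\int_0^\pi \sin^m u\, du = s(m)/g$ by \eqref{eqn dfn of s(m)}, whence $\omega_n = c\, s(m)/(2^m g)$. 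Solving for $c/2^m$ and substituting back into $\operatorname{Vol}_0(M_\varphi) = (c/2^m)\sin^m(g\varphi)$ yields \eqref{eqn volume of regular fiber of an isoparametric foliation}.

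The step requiring the most care is the identification of $\varphi$ as a genuine Riemannian submersion together with the justification of the coarea formula across the singular fibers: one must verify that $\operatorname{grad}\varphi$ has unit length on the regular part (this is exactly where the structure of $\mathscr M$ as a foliation by geodesic parallels of the round $\mathbb{S}^n$ enters) and that $V_\pm$ contribute nothing to the total volume. Granting these structural facts, the remainder is simply the evaluation of the elementary integral defining $s(m)$.
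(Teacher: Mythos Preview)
Your proof is correct and follows essentially the same approach as the paper: simplify \eqref{regular fiber volume} via the double-angle identity, then recover $c$ by integrating the fiber volumes over $(0,\pi/g)$ using that the distance-to-$V_-$ map is a Riemannian submersion. If anything, you supply slightly more justification (unit gradient of the distance function, measure-zero singular fibers) than the paper does.
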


\begin{proof}
Since $m_1 = m_2 = m$ we have from \eqref{regular fiber volume} 
\begin{equation} \label{eqn vol of M_varphi in terms of c}
\operatorname{Vol}_0 (M_\varphi) = \frac{c}{2^m} \sin^m (g\varphi), 
\end{equation}
hence it suffices to find $c$. By the relation \eqref{definition regular fiber} we know that the mapping
\begin{align*}
    \mathbb S^n\setminus (V_- \cup V_+)&\to \left(0, \frac{\pi}{g}\right), \\
    x\in M_\varphi & \mapsto \varphi,
\end{align*}
is a Riemannian submersion, with respect to the standard metric on $\mathbb S^n$ and the Euclidean metric on the interval $(0,\pi/g)$. Using (\ref{eqn vol of preimage of M}) and (\ref{eqn vol of M_varphi in terms of c}), 
\begin{align*}
\omega_n := \operatorname{Vol}_0 (\mathbb S^n) = \int_0^{\pi/g} \operatorname{Vol}_0 (M_\varphi) d\varphi = \frac{c}{g2^m} \int_0^\pi \sin^m t \; \mathrm{d}t. 
\end{align*}
Hence $c = g2^m \omega_n s(m)^{-1}$ and Lemma \ref{lem volume of M_varphi} is proved.
\end{proof}
For simplicity, from now on we write 
\begin{equation} \label{eqn dfn of c_g,m}
c_{g, m} = \frac{g \omega_n}{s(m)}. 
\end{equation}
\subsection{The Reduction Theory; Proof of Theorem \ref{thm entropy upper bounds for f-invariant self-shrinkers}}
Equip $\mathbb R^{n+1} \backslash \mathbb R_{\ge 0} \cdot (V_+ \cup V_-)$ with the metric $g_{\text{\ss}}$ defined in (\ref{eqn dfn of g_ang}), and let 
\begin{align}\label{eqn definition of g_subm}
g_{\text{S}} = e^{-\frac{r^2}{2n}} (dr^2 + r^2 d\varphi^2)
\end{align}
be a metric on $(0, \infty) \times (0, \pi/g)$.
Define the map
\begin{align}
f : \mathbb R^{n+1} \backslash \mathbb R_{\ge 0} \cdot (V_+ \cup V_-) &\to (0,\infty) \times \left( 0, \frac{\pi}{g}\right),\\
\nonumber x &\mapsto \left( \| x\|, \frac{\arccos (F(x/\|x\|))}{g}\right). 
\end{align}
Here $F$ is the corresponding Cartan-M{\"u}nzner polynomial and $V_\pm = (F|_{\mathbb S^n})^{-1} (\pm 1)$ are the singular fibers of the foliation. It can be shown that $f$ is a Riemannian submersion. A set $N \subset \mathbb{R}^{n+1}$ is called $f$-{\sl invariant} if there exists a set $C \subset (0,\infty) \times (0, \pi/g)$ such that $N = f^{-1}(C)$. 

Using \cite[Theorem 4]{PT}, it is proved in \cite[Proposition 2.4]{riedler} that an $f$-invariant hypersurface $N \subset \mathbb{R}^{n+1}$ is a closed self-shrinker if and only if $C := f(N)$ is a closed geodesic in $(0,\infty) \times (0,\pi/g)$ with respect to the metric $h$ defined by
\begin{equation} \label{eqn definition of h}
h(r,\phi) := \operatorname{Vol}_{g_{\text{\ss}}} (f^{-1} (r, \varphi))^2 g_{\text{S}}(r,\phi).
\end{equation}
Note that for any $(r, \varphi) \in (0,\infty) \times (0,\pi/g)$, we have by Lemma \ref{lem volume of M_varphi}
\begin{align*}
\operatorname{Vol}_{g_{\text{\ss}}} (f^{-1} (r, \varphi)) & = \operatorname{Vol}_{g_{\text{\ss}}} (r M_\varphi) \\
&= e^{-\frac{(n-1)r^2}{4n}} \operatorname{Vol}_0 (r M_\varphi) \\
&= r^{n-1}e^{-\frac{(n-1)r^2}{4n}} \operatorname{Vol}_0 (M_\varphi) \\
&= c_{g,m} r^{n-1}e^{-\frac{(n-1)r^2}{4n}} \sin^m (g\varphi).
\end{align*}
Using \eqref{eqn definition of g_subm} we hence obtain the following expression for the metric $h$\footnote{In equation (4) of \cite{riedler}, the metric has the term $e^{-r^2}$ instead of $e^{-r^2/2}$ since they use a different scaling convention for self-shrinkers.}
\begin{equation}\label{eqn definition of h expression}
    h(r,\phi) =  c^2_{g,m}r^{2mg} e^{-r^2/2} \sin \left( g \varphi\right)^{2m}  (dr^2 + r^2 d\varphi^2).
\end{equation}
By a direct calculation, the Gaussian curvature $K = K_{g,m}$ of $h$ is given by
\begin{equation*}
K = \frac{ e^{r^2/2}}{c^2_{g,m} r^{2n} \sin \left( g \varphi\right)^{2m}} \left( r^2 + \frac{(n-1)g}{ \sin ^2 (g\varphi)} \right).
\end{equation*}
It is clear that $K$ is strictly positive and that $K \to +\infty$ as $(r, \varphi)$ tend to the boundary of $(0,\infty) \times (0,\pi/g)$. By simple calculus, the Gaussian curvature $K$ is bounded from below by $\kappa = \kappa_{g,m}>0$, where 
\begin{equation} \label{eqn expression for kappa_gm}
\kappa _{g,m}= \frac{ e^{\frac{y_{g,m}}{2}}}{c_{g,m}^2 y_{g,m}^n}\left( y_{g,m} + (n-1)g \right)
\end{equation}
and 
\begin{equation}
y_{g,m} =\frac{(n-1)(2-g) + \sqrt{(g-2)^2(n-1)^2 + 8ng(n-1)}}{2}.
\end{equation}
Moreover, by \eqref{eqn definition of h}, (\ref{eqn vol of preimage of M}) and \eqref{eqn dfn of g_ang},
the length $L_h (C)$ of the curve $C$ with respect to the metric $h$ is given by
\begin{align} \label{eqn length of C equal volume}
L_h (C) &= \int_C \operatorname{Vol}_{g_{\text{\ss}}} (f^{-1} (r, \varphi)) d\mu_C(r,\phi) \nonumber\\
&= \operatorname{Vol}_{g_{\text{\ss}}} (f^{-1} (C)) \nonumber \\
&= \operatorname{Vol}_{g_{\text{\ss}}} (N) \nonumber\\
&= \int_N e^{-\frac{\|x\|^2}{4}} d\mu_N. 
\end{align} 
Where $d\mu_C$ is the volume form of $C$ in $((0, \infty) \times (0, \pi/g), g_{\text{S}})$. 

\begin{proof}[Proof of Theorem \ref{thm entropy upper bounds for f-invariant self-shrinkers}]
For each pair $(g, m)$ let 
\begin{equation} \label{eqn expression for E_gm}
    E_{g,m} = \frac{2\pi }{(4\pi)^{n/2} \sqrt{\kappa_{g,m}}}. 
\end{equation} 
Let $N$ be a $f$-invariant embedded self-shrinker in $\mathbb R^{n+1}$. Then $C:= f(N)$ is the image of an embedded geodesic in $(0, \infty) \times (0, \pi/g)$ with metric $h$ given in (\ref{eqn definition of h}). Note that $N$ is diffeomorphic to $\mathbb S^1 \times M$ if and only if $f(N)$ is an embedded closed geodesic. Hence we can apply \cite[Theorem 3.1]{MMM} to conclude that 
$$L_h (C) \le \frac{2\pi }{\sqrt{\kappa_{g,m}}}.$$
Together with (\ref{eqn length of C equal volume}) and Lemma \ref{lem lambda equals F 01}, one obtains the entropy bound $\lambda (N)\le E_{g,m}$. 
\end{proof}

\begin{rem}
When $g=1$, the isoparametric foliation of $\mathbb S^n$ is given (up to congruence) by a family of $(n-1)$-spheres 
\begin{equation}
M_\varphi = \{ \cos\varphi \} \times \sin\varphi \ \mathbb S^{n-1}, \ \ \varphi \in (0,\pi) 
\end{equation}
and $f$-invariant hypersurfaces are percisely the rotationally symmetric hypersurfaces. In \cite{MMM}, the authors and M{\o}ller derived entropy bounds for complete embedded rotationally symmetric self-shrinkers. We remark that the entropy bounds found in \cite{MMM} are a special case of the ones obtained here: when $g=1$, then $m=n-1$ and we obtain from (\ref{eqn expression for E_gm}) that 
\begin{equation*}
E_{1, n-1} = \frac{2\pi}{(4\pi)^{n/2} \sqrt{\kappa_{1,n-1}}}. 
\end{equation*}
From (\ref{eqn expression for kappa_gm}) and $\kappa_n$ found in \cite[section 3]{MMM}, we have 
\begin{equation*}
\sqrt{ \kappa_{1,n-1}} = \frac{1}{c_{1,n-1}} \sqrt{\kappa_n}. 
\end{equation*}
Together with the identity $\omega_n = s(n-1) \omega_{n-1}$, (\ref{eqn dfn of c_g,m}) and the expression of $E_n$ in \cite[equation (3.1)]{MMM}, we have $E_{1,n-1} = E_n$ for all $n\ge 2$. 
\end{rem}

\section{Length upper bound on immersed closed geodesics in Riemannian surfaces with positive curvature} \label{section length bound on immersed curves}
In this section we prove Theorem \ref{thm length bounds for immersed loop with k self intersections}, which gives an upper bound on the length of closed immersed geodesics in a simply connected Riemannian surface with Gaussian curvature bounded below by a strictly positive constant $\kappa$. We start with a short outline of the section.

An immersed closed geodesic $\sigma$ encloses a number of domains. In Proposition \ref{prop gamma bounds k+1 domains} we determine the number of these domains in terms of the number of self-intersection points of $\sigma$. The proof of Theorem \ref{thm length bounds for immersed loop with k self intersections} then reduces to proving an estimate of the length of the boundary of each domain. This is done by showing that the domains are complete length spaces of curvature $\geq \kappa$, for which one can prove an analogous result as in \cite[Theorem 3.1]{MMM}. The necessary technical details are provided in Lemma \ref{shorest geo in Omega} and Proposition \ref{prop Omega domega has curvature ge kappa}. \\



Let $\mathbb V $ be a Riemannian $2$-manifold homeomorphic to $\mathbb R^2$. 

\begin{dfn}
Let $\gamma: \mathbb S^1 \to \mathbb V $ be an immersed closed curve. Let $\operatorname{Image}(\gamma)$ denote the image of $\gamma$. A point $p$ in $\operatorname{Image}(\gamma)$ is called a self-intersection point if $\gamma^{-1}(p)$ has more than one element. A self-intersection point $p$ is transverse if for all $t, s\in \gamma^{-1}(p)$, the pair $\gamma'(s), \gamma'(t)$ are not parallel to each other. Let $S$ be the set of all self-intersection points of $\gamma$ and let $k\in \mathbb N$. We say that $\gamma$ has $k$ self-intersection points counted with multiplicity if 
\begin{equation} \label{eqn dfn of k self intersection counted with multiplicity}
\sum_{p\in S} (|\gamma^{-1}(p)|-1) = k.
\end{equation}
\end{dfn}

\begin{dfn}
Let $\gamma$ be an immersed closed curve in $\mathbb V$ with only transverse self-intersections. A pre-compact connected component of $\mathbb V \setminus \operatorname{Image}(\gamma)$ is called a domain enclosed by $\gamma$.
\end{dfn}

\begin{prop}  \label{prop interior angles < pi}
Let $\gamma : \mathbb S^1 \to \mathbb V$ be an immersed closed curve in $\mathbb V$ with only transverse self-intersections. Then the boundary $\partial U$ of each domain $U$ enclosed by $\gamma$ is piece-wise smooth and at each corner of $\partial U$, the interior angle is less than $\pi$. 
\end{prop}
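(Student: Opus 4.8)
The plan is to reduce the whole statement to a local analysis of $\operatorname{Image}(\gamma)$, separating the generic points from the self-intersection points, and then to an elementary angle count. Since $U$ is a connected component of $\mathbb V \setminus \operatorname{Image}(\gamma)$, we have $\partial U \subseteq \operatorname{Image}(\gamma)$, so it suffices to describe $\operatorname{Image}(\gamma)$ near each of its points. First I would check that the self-intersection set $S$ is finite: at a transverse self-intersection $p$ with $\gamma^{-1}(p) = \{t_1,\dots,t_j\}$ and $j \ge 2$, continuity of $\gamma$ and compactness of $\mathbb S^1$ give a ball $B_\varepsilon(p)$ whose preimage is a disjoint union of $j$ short arcs through $t_1,\dots,t_j$, and transversality forces these arcs to meet only at $p$; hence $p$ is isolated in $S$ and $S$ is finite. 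Away from $S$ the immersion is locally an embedding, so $\operatorname{Image}(\gamma)$ is an embedded smooth $1$-manifold and $\partial U$ is smooth there. At a point $p \in S$, passing to a smooth chart (for instance geodesic normal coordinates centred at $p$) turns the $j$ local sheets into $j$ smooth arcs through the origin with pairwise distinct tangent lines; the $2j$ emanating half-arcs then cut a small punctured neighbourhood of $p$ into $2j$ open sectors. Combining the two pictures shows that $\partial U$ is a finite union of smooth arcs whose only corners occur at points of $S$, which, together with the pre-compactness of $U$, gives the piecewise smoothness.

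It remains to bound the interior angle at a corner $p \in S \cap \partial U$. For $\varepsilon$ small the connected components of $B_\varepsilon(p)\setminus\operatorname{Image}(\gamma)$ are exactly the $2j$ sectors above, and since each sector is connected and disjoint from $\operatorname{Image}(\gamma)$ it lies in a single component of the global complement; hence $U \cap B_\varepsilon(p)$ is the union of those sectors contained in $U$, and each contributes a corner of $\partial U$ at $p$ whose interior angle equals the angle of that sector. Because the angle between two curves at a point is determined by their tangent vectors alone, this interior angle is simply the Riemannian angle between the two tangent rays bounding the sector, so the curvature of $\mathbb V$ plays no role in the bound.

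The crux, and the step I expect to be the main obstacle to phrase cleanly, is the two-dimensional fact that every sector determined by $j \ge 2$ distinct tangent lines through a point has angle strictly less than $\pi$. I would argue by contradiction: if a sector bounded by a ray $\rho$ at angle $\alpha$ and the next ray counterclockwise had angle $\ge \pi$, then no ray would lie in the open arc $(\alpha,\alpha+\pi)$. But each line contributes a pair of antipodal rays, so the antipode of $\rho$ occupies angle $\alpha+\pi$, while a second line, which exists since $j \ge 2$ and all tangent lines are distinct, has exactly one of its two rays strictly inside the open half-circle $(\alpha,\alpha+\pi)$. This contradicts the emptiness of that arc and forces every sector angle below $\pi$. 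Since the interior angle at each corner is one of these sector angles, the proposition follows.
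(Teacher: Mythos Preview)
Your argument is correct and considerably more detailed than the paper's. Both proofs rest on the same geometric fact---at a transverse self-intersection with $j\ge 2$ branches, no sector between consecutive tangent rays can span an angle $\ge\pi$---but they reach the contradiction differently. You make this explicit and purely local via the antipodal-ray pigeonhole: any line other than the one containing the bounding ray $\rho$ must drop a ray into the open half-circle $(\alpha,\alpha+\pi)$. The paper instead asserts (without justification) that if $\theta>\pi$ then some sub-arc $\beta$ of $\gamma$ enters the interior of $U$, and then frames the contradiction globally: $\beta$ lies on the boundary of some enclosed domain $\tilde U$, which is forced to coincide with $U$, so $\beta\subset\partial U$ after all. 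The hidden reason that $\beta$ exists is precisely your local observation---the smooth continuation past $p$ of one of the two boundary arcs lands inside the reflex sector---so the two arguments are really the same idea in different packaging. Your version has the advantage of being self-contained and of handling the $\theta=\pi$ case simultaneously; the paper's is terser but leaves the key step to the reader.
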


\begin{proof}
It is clear that the boundary of $U$ is piece-wise smooth. At each corner, the interior angle $\theta$ is not $\pi$ since $\gamma$ has transverse self-intersections. If $\theta >\pi$, then there is a sub-arc $\beta$ of $\gamma$ which lies in the interior of $U$ and $\beta$ is not part of $\partial U$. Let $\beta$ be the boundary of a domain $\tilde U$ enclosed by $\gamma$. But since $\beta$ lies in the interior of $U$, we must have $U = \tilde U$, which is a contradiction that $\beta$ is not part of $\partial U$. Hence we have $\theta < \pi$. 
\end{proof}

\begin{prop} \label{prop gamma bounds k+1 domains}
Let $\gamma$ be an immersed closed curve in $\mathbb V$ with $k$ transverse self-intersections counted with multiplicity. Then $\gamma$ encloses exactly $k+1$ domains.
\end{prop}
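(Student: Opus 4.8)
The plan is to prove that an immersed closed curve $\gamma$ with $k$ transverse self-intersections encloses exactly $k+1$ domains by an Euler characteristic argument. The key observation is that the image of $\gamma$ together with the ambient surface gives rise to a planar graph structure, and the number of bounded faces of this graph can be read off from the Euler formula once we count vertices and edges correctly. First I would regard $\operatorname{Image}(\gamma)$ as a graph $G$ embedded in $\mathbb V \cong \mathbb R^2$: the vertices are the self-intersection points of $\gamma$, and the edges are the arcs of $\gamma$ between consecutive self-intersection points. Since each self-intersection is transverse, at a point $p$ with $|\gamma^{-1}(p)| = j$ the curve passes through $p$ exactly $j$ times, so $p$ is a vertex of degree $2j$.

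Next I would carefully count vertices and edges. Writing $S$ for the set of self-intersection points and $j_p = |\gamma^{-1}(p)|$, the number of vertices is $V = |S|$ and, by the handshaking relationship between the degree sum and the number of edges, the number of edges is $E = \frac{1}{2}\sum_{p \in S} 2 j_p = \sum_{p \in S} j_p$. Using the definition of $k$ in \eqref{eqn dfn of k self intersection counted with multiplicity}, namely $k = \sum_{p \in S}(j_p - 1)$, we get $E = \sum_{p\in S} j_p = k + |S| = k + V$, so that $E - V = k$. One subtlety I would address first: if $\gamma$ has no self-intersection points at all (so $k=0$ and $S = \emptyset$), the graph has no vertices, and I would handle this case separately by introducing a single artificial vertex on the embedded loop, giving $V = 1$, $E = 1$, and still $E - V = 0 = k$; this keeps the subsequent Euler computation uniform.

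I would then apply Euler's formula for a connected graph embedded in the sphere $\mathbb S^2 = \mathbb R^2 \cup \{\infty\}$. Connectivity of $G$ follows because $\gamma$ is the continuous image of the connected set $\mathbb S^1$, so $\operatorname{Image}(\gamma)$ is connected. Euler's formula gives $V - E + F = 2$, where $F$ counts all faces including the unbounded one. The faces of $G$ in $\mathbb S^2$ correspond to the connected components of $\mathbb V \setminus \operatorname{Image}(\gamma)$ together with the point at infinity; exactly one of these components is unbounded (non-pre-compact), and the remaining $F - 1$ faces are precisely the pre-compact connected components, i.e. the domains enclosed by $\gamma$ in the sense of the preceding definition. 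Hence the number of enclosed domains is $F - 1 = (2 - V + E) - 1 = 1 + (E - V) = 1 + k$, which is the desired count.

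The main obstacle I anticipate is justifying that the combinatorial face count from Euler's formula genuinely corresponds to the topological notion of ``pre-compact connected component of $\mathbb V \setminus \operatorname{Image}(\gamma)$'' used in the definition of an enclosed domain. This requires knowing that each face of the embedded graph $G$ is an open connected region homeomorphic to a disk (or at least that faces are in bijection with the connected components of the complement), which is where transversality is essential: at each vertex the $2j_p$ local arcs are pairwise non-tangent, so a small punctured neighborhood of $p$ is divided into exactly $2j_p$ angular sectors, ensuring the local structure is that of a genuine embedded $4$-valent-or-higher graph rather than something degenerate. I would invoke the Jordan curve theorem / the theory of embedded graphs in the plane (or cite a standard reference) to make the identification of combinatorial faces with complementary components rigorous, and I would note that exactly one complementary component is unbounded since $\operatorname{Image}(\gamma)$ is compact.
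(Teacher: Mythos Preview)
Your argument is correct and is essentially the same as the paper's: both view $\operatorname{Image}(\gamma)$ as a graph whose vertices are the self-intersection points and whose edges are the arcs between them, count edges by the handshake identity $E=\sum_{p\in S}|\gamma^{-1}(p)|=k+V$, and then read off the number of bounded faces from an Euler characteristic computation. The only cosmetic difference is the space on which Euler's formula is applied: the paper works with the filled region $W=\overline{U_1\cup\cdots\cup U_f}$ (using $\chi(W)=1$), whereas you compactify to $\mathbb S^2$ (using $\chi=2$) and subtract one for the unbounded face; the resulting identity $f=1+(E-V)=k+1$ is identical.
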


\begin{proof}
Let $U_1,\cdots, U_f$ be the domains enclosed by $\gamma$. For every $j=1, \cdots, f$ the closure of the domain $U_j$ has a piece-wise smooth boundary consisting of subarcs of $\gamma$. Let $\alpha_1 , \cdots, \alpha_e$ be the collection of those sub-arcs. Let $S = \{p_1, \cdots, p_\ell\}$ be the set of self-intersection points of $\gamma$ and let $W = \overline {U_1 \cup \cdots \cup U_f}$. Since $W$ is simply connected, the Euler formula implies 
\begin{equation}
\chi(W) = 1 = \ell - e + f,
\end{equation}
where $\chi(W)$ is the Euler characteristic of $W$. Let $s_i = |\gamma^{-1}(p_i)|$. By definition there are $2s_i$ sub-arcs with vertices $p_i$ (in the case where there is a loop, i.e., a sub-arc which starts and ends at $p_i$, that sub-arc would be counted twice). Since each sub-arc has exactly two vertices (also counted with multiplicity), we have
$$e = \sum_{i=1}^\ell s_i$$
and hence
\begin{equation}
f = 1 + e - \ell = 1 + \sum_{i=1}^\ell (s_i-1) = 1+k
\end{equation}
by (\ref{eqn dfn of k self intersection counted with multiplicity}). This finishes the proof of the proposition. 
\end{proof}

Let $h$ be a Riemannian metric on $\mathbb V$ such that the Gaussian curvature $K$ satisfies $K \geq \kappa > 0$ for some positive constant $\kappa$, and let $\sigma$ be an immersed closed geodesic in $(\mathbb V, h)$. By the uniqueness theorem of solutions to ODE, any self-intersection point of $\sigma$ must be transverse. We shall now describe the type of domains $U \subset \mathbb V$ that can be realized as the domains enclosed by $\sigma$. 

Let $U$ be a precompact, connected open subset in $(\mathbb V, h)$ and let $\Omega:=\overline U \subset \mathbb V$. Assume further that the boundary $\partial \Omega$ is parameterized by a curve $\beta : \mathbb S^1 \to (\mathbb V, h)$ consisting of piece-wise smooth geodesic arcs, and which furthermore satisfies the following properties: identifying $\mathbb S^1$ with $[0,d]$, with $0$ and $d$ identified, then there is a partition
$$\mathscr P =\{0 = t_0 < t_1 <\cdots < t_a = d (=0)\}$$
so that 
\begin{equation} \label{conditions on beta}
    \begin{cases} 
    \beta|_{\mathbb S^1 \setminus \mathscr P} \text{ is injective,} \\
    \text{for each }j=0, \cdots, a-1, \ \beta|_{[t_j, t_{j+1}]} \text{ is a geodesic in }(\mathbb V, h), \text{ and}\\
    \text{for each }j=0, \cdots, a-1 \text{ the interior angle at }q_j := \beta(t_j) \text{ is less than }\pi.
    \end{cases}
\end{equation} 

We remark that if $\beta$ is injective, then each vertex $q_j$ is joined by only two geodesic arcs $\beta|_{[t_{j-1}, t_j]}$, $\beta|_{[t_j, t_{j+1}]}$. 

Assume first that $\partial \Omega$ is parametrized by an injective curve $\beta$ which satisfies (\ref{conditions on beta}). Using Lemma \ref{shorest geo in Omega} below, we will define as in \cite[Section 3]{MMM} a metric structure $d^\Omega$ on $\Omega$ using the Riemannian metric $h$ and show that $(\Omega, d^\Omega)$ is a complete length space with curvature $\ge \kappa$, where $\kappa$ is the strictly positive lower bound of the Gaussian curvature of $(\mathbb V, h)$. See \cite{BBI} or \cite[Section 3]{MMM} for the terminology and notation needed from metric geometry. 

For each $p, q\in \Omega$, let 
$$ d^\Omega(p, q) = \inf_{\gamma} L_h(\gamma),$$
where the infimum is taken among piece-wise $C^1$ curves $\gamma : [0,1]\to \Omega$ with $\gamma(0) = p$, $\gamma(1) = q$ and 
\begin{equation}
    L_h(\gamma) =\int_0^1 \sqrt{h(\gamma'(t), \gamma'(t))} dt
\end{equation} 
is the length of $\gamma$ in $(\mathbb V, h)$. The following lemma is proved in \cite[Lemma 3.3]{MMM} when $\partial \Omega$ has no corners. 

\begin{lem} \label{shorest geo in Omega}
Let $\Omega$ be as above, i.e. a compact connected set in $(\mathbb V, h)$ so that the boundary $\partial \Omega$ is parametrized by an injective curve $\beta$ which satisfies (\ref{conditions on beta}). Let $p, q\in \Omega$. Then there is a simple geodesic $\gamma$ in $\Omega$ joining $p$ and $q$ which is shortest among all piece-wise $C^1$ curves in $\Omega$ joining $p$ and $q$. Moreover, 
\begin{itemize}
\item [(i)] $\gamma (t) \notin \{ q_1, \cdots, q_a\}$ for all $t\in (0,1)$.
\item [(ii)] if one of $p, q$ is in the interior of $\Omega$, then $\gamma$ also lies in the interior of $\Omega$ (except possibly at the other end point), 
\item [(iii)] if both $p, q$ are in $\operatorname{Image}(\beta) = \partial \Omega$, then either $\gamma$ lies completely in the image of $\beta|_{[t_j, t_{j+1}]}$ for some $j=0, \cdots, a-1$, or the interior of $\gamma$ lies inside the interior of $\Omega$.
\end{itemize}
\end{lem}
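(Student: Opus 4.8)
The plan is to first produce a shortest curve in $\Omega$ by a compactness argument, and then to analyse its local structure near $\partial\Omega$, with the corner behaviour being the essential new feature beyond the smooth-boundary case treated in \cite[Lemma 3.3]{MMM}. Since $\Omega$ is compact and path-connected by piecewise $C^1$ curves, $(\Omega, d^\Omega)$ is a compact length space and $d^\Omega(p,q)$ is finite. Taking a minimizing sequence of piecewise $C^1$ curves in $\Omega$ joining $p$ and $q$, reparametrizing each proportionally to $h$-arclength so that the family is uniformly Lipschitz, and invoking Arzel\`a--Ascoli together with the lower semicontinuity of $L_h$ under uniform convergence, I would extract a subsequence converging uniformly to a limit $\gamma$. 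The limit lies in the closed set $\Omega$ and satisfies $L_h(\gamma) = d^\Omega(p,q)$, so $\gamma$ is a shortest curve in $\Omega$ from $p$ to $q$.

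Next I would analyse where $\gamma$ meets the boundary. On any subinterval along which $\gamma$ lies in the open interior of $\Omega$, it is locally length-minimizing in the open manifold $(\mathbb V, h)$ and is therefore a smooth geodesic. To prove (i), suppose $\gamma$ passes through a corner $q_j$ at an interior time $t_0 \in (0,1)$. Because the interior angle at $q_j$ is less than $\pi$, which is exactly the third condition in (\ref{conditions on beta}) (cf. Proposition \ref{prop interior angles < pi}), a sufficiently small geodesic neighbourhood of $q_j$ in $\Omega$ is geodesically convex: in normal coordinates at $q_j$ the domain is modelled on a wedge of angle less than $\pi$. Hence for small $\varepsilon>0$ the points $\gamma(t_0-\varepsilon)$ and $\gamma(t_0+\varepsilon)$ can be joined by a geodesic arc contained in $\Omega$ and strictly shorter than $\gamma|_{[t_0-\varepsilon, t_0+\varepsilon]}$, contradicting minimality. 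This establishes (i), and the same convexity shows that $\gamma$ cannot meet $\partial\Omega$ at an interior time in two further ways: since $\Omega$ lies locally on one side of each geodesic edge, a transversal contact in which $\gamma$ leaves the interior, touches an edge, and returns can always be shortcut through the interior, while a tangential contact with an edge would, by uniqueness of geodesics with prescribed initial data, force $\gamma$ to coincide with that edge.

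From this dichotomy the remaining assertions follow. If one of $p,q$ lies in the interior, then $\gamma$ cannot run along a boundary edge, since leaving such an edge to reach the interior endpoint would create a corner of $\gamma$ on the edge, again removable by a shortcut through the interior; hence the interior of $\gamma$ lies in the open domain, giving (ii). If both $p$ and $q$ lie on $\partial\Omega$, then either $\gamma$ stays on a single edge $\beta|_{[t_j, t_{j+1}]}$, which is itself a geodesic, or it enters the interior, in which case the dichotomy forbids any further interior-time contact with $\partial\Omega$ and (i) forbids passing through a corner, so the interior of $\gamma$ lies in the open domain; this is (iii). In all cases $\gamma$ is a single geodesic (smooth in the interior, or contained in one geodesic edge), and it is simple because a self-intersection $\gamma(s)=\gamma(t)$ with $s<t$ would allow me to excise the loop $\gamma|_{[s,t]}$ and strictly decrease the length.

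I expect the main obstacle to be the boundary and corner analysis in the curved setting, namely rigorously justifying that a neighbourhood of a corner with interior angle less than $\pi$ is geodesically convex and that every transversal contact with a geodesic edge admits a strictly shortening deformation. These are first-variation and convexity statements that hold precisely because each edge has vanishing geodesic curvature and the corners are convex; they are the exact points at which the argument must be pushed beyond \cite[Lemma 3.3]{MMM} to accommodate the presence of corners in $\partial\Omega$.
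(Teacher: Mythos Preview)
Your proposal is correct and follows essentially the same approach as the paper's proof, which likewise constructs a Lipschitz minimizer via Arzel\`a--Ascoli and then rules out interior passage through a corner $q_j$ by a shortcut argument exploiting the wedge structure with opening angle less than $\pi$. The only minor technical difference is that the paper, rather than invoking geodesic convexity of the wedge directly, uses an (almost) Euclidean straight line in a small ball where $h$ is nearly Euclidean to produce the strictly shorter curve, and it defers (ii), (iii), and the smooth-boundary analysis entirely to \cite[Lemma 3.3]{MMM}.
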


\begin{proof}
The Lemma follows from the same arguments used in the proof of \cite[Lemma 3.3]{MMM}. We only sketch the minor modifications here. 

Let $p, q\in \Omega$, $p\neq q$. As in the proof of \cite[Lemma 3.3]{MMM}, using a minimizing sequence we construct a Lipschitz length minimizing curve $\gamma : [0,1] \to \Omega$ with $\gamma(0)=p$, $\gamma(1) = q$, and we may assume that $\gamma ^{-1} (q) = \{1\}$, $\gamma^{-1}(p) = \{0\}$. 

First we show (i) by contradiction: if not, let $s_0 \in (0,1)$ be the first time such that $\gamma(s_0) = q_j$ for some $j$. Let $B$ be the closed geodesic ball of radius $\epsilon$ in $(\mathbb V, h)$ centered at $q_j$. For $\epsilon$ small enough, $p, q$ are not in $B$. Hence there are $s_1, s_2 \in (0,1)$ so that $s_1 < s_0< s_2$ and $\gamma (s_i) \in \partial B$ for $i=1,2$. Note that since $\beta|_{[t_{j-1}, t_j]}$ and $\beta_{[t_j, t_{j+1}]}$ are both geodesics, under geodesic polar coordinates centered at $q_j$, the set $B\cap \Omega$ is of the form
\begin{equation} \label{eqn U_x in polar coordinates}
    \{ (r, \theta) : 0\le r \le \epsilon, \theta_1 \le \theta\le \theta_2\}
\end{equation}
with $\theta_2 - \theta_1 <\pi$ by the third assumption in (\ref{conditions on beta}). Assuming $\epsilon$ is small enough so that $g$ is nearly Euclidean in $U$. Together with $\theta_2-\theta_1<\pi$, the (Euclidean) straight line $\ell$ joining $\gamma(s_1)$, $\gamma(s_2)$ has length
\begin{align*}
    L_h(\ell) < d_h (\gamma(s_1), q_j) + d_h (q_j , \gamma (s_2)) \le L_h (\gamma|_{[s_1, s_2]}). 
\end{align*}
Thus $\gamma$ cannot be length minimizing in $\Omega$ and hence (i) is proved. Part (ii) and (iii) of the Lemma can be proven similarly as in the proof of \cite[Lemma 3.3]{MMM} and are skipped. 
\end{proof}

From Lemma \ref{shorest geo in Omega} it follows that $(\Omega, d^\Omega)$ is a complete length space.

\begin{prop} \label{prop Omega domega has curvature ge kappa}
Let $\Omega$ be as in Lemma \ref{shorest geo in Omega}. Then the complete length space $(\Omega, d^\Omega)$ has curvature $\ge \kappa$, where $\kappa$ is the lower bound of the Gaussian curvature of $(\mathcal S, h)$. 
\end{prop}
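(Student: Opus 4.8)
The plan is to show that every sufficiently small geodesic triangle in $(\Omega, d^\Omega)$ satisfies the Toponogov/Alexandrov comparison inequality with the model space of constant curvature $\kappa$, which is the definition of curvature $\ge \kappa$ for a length space. The key geometric input is that the Gaussian curvature of the ambient metric $h$ on $\mathbb{V}$ is bounded below by $\kappa$, so that in the interior of $\Omega$ the Riemannian comparison theorem (Toponogov) already gives the desired inequality. The only difficulty comes from triangles whose vertices or minimizing sides touch the boundary $\partial \Omega$, in particular the corners $q_j$; the geodesic arcs realizing distances in $d^\Omega$ may run along $\partial \Omega$ or be forced to bend at the concave-looking boundary. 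The strategy, following \cite[Section 3]{MMM}, is to use the structural information from Lemma \ref{shorest geo in Omega} to reduce the boundary case to the interior case.

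First I would recall the precise local definition: $(\Omega, d^\Omega)$ has curvature $\ge \kappa$ if every point has a neighborhood in which any geodesic triangle with vertices $p_0, p_1, p_2$ and geodesic sides (realized by the shortest curves provided by Lemma \ref{shorest geo in Omega}) satisfies the inequality $\tilde\angle p_i \ge \angle p_i$ at each vertex, where $\tilde\angle$ denotes the corresponding comparison angle in the model surface $M_\kappa^2$ of constant curvature $\kappa$ with the same side lengths. By Lemma \ref{shorest geo in Omega} the sides of such a triangle are simple geodesics that either lie on a single boundary arc $\beta|_{[t_j, t_{j+1}]}$ or have interior contained in the interior of $\Omega$; moreover part (i) guarantees that the interior of a side never passes through a corner $q_j$.

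Next I would split into cases according to how the triangle meets $\partial \Omega$. If all three sides lie in the interior of $\Omega$ (except possibly at vertices), then the triangle is an ordinary geodesic triangle in the Riemannian surface $(\mathbb{V}, h)$, and the classical Toponogov comparison theorem for surfaces with $K \ge \kappa$ gives the angle comparison directly. The genuinely new situation is when a vertex sits on the boundary, or when a side runs along a boundary geodesic arc. Since the boundary arcs are themselves geodesics of $(\mathbb{V}, h)$, a side lying in $\beta|_{[t_j, t_{j+1}]}$ is still a genuine $h$-geodesic, so the only obstruction to applying Toponogov directly is the behavior of angles at a corner $q_j$, where the interior angle is less than $\pi$ by (\ref{conditions on beta}). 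The crucial point is that the condition ``interior angle $< \pi$'' makes $\Omega$ locally geodesically convex enough that angles measured inside $\Omega$ at a corner are not larger than the corresponding angles in $(\mathbb{V}, h)$, so the comparison inequality is preserved — this is exactly the role played by Proposition \ref{prop interior angles < pi} in the construction. I would therefore argue that the angle comparison for a triangle touching $\partial \Omega$ follows by a limiting/perturbation argument, approximating such triangles by interior triangles and invoking the interior case together with lower semicontinuity of the angles, as carried out in \cite[Section 3]{MMM}.

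The main obstacle I expect is precisely the treatment of triangles with a vertex at a corner $q_j$ of $\partial \Omega$: one must verify that the shortest-curve structure from Lemma \ref{shorest geo in Omega}, combined with the interior-angle bound $\theta < \pi$, rules out any violation of the comparison inequality caused by the concavity of the boundary. Everything else reduces to the interior Riemannian comparison theorem, which is standard. Since the construction of $d^\Omega$ and the corresponding argument are carried out in the corner-free case in \cite[Lemma 3.3 and Section 3]{MMM}, I anticipate that the proof amounts to checking that the minor modifications recorded in Lemma \ref{shorest geo in Omega} — chiefly the presence of finitely many corners with interior angle $< \pi$ — do not disturb the comparison estimate, and so the bulk of the write-up will consist of pointing to \cite{MMM} for the routine parts and isolating the corner analysis as the only new ingredient.
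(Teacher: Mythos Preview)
Your proposal correctly identifies the corners $q_j$ as the only new ingredient beyond \cite{MMM} and is headed in the right direction, but the route you sketch differs from the paper's and is more roundabout than necessary.

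You propose a case-by-case analysis of geodesic triangles in $(\Omega,d^\Omega)$, handling those that touch $\partial\Omega$ by an approximation with interior triangles and a semicontinuity argument for the angles. The paper does something more direct: it shows that $\Omega$ is \emph{locally geodesically convex} at every point, including each corner. Concretely, for $x\in\Omega$ one takes a geodesically convex ball $V_x$ in $(\mathbb V,h)$, sets $U_x=\Omega\cap V_x$, and shows that for any $p,q\in U_x$ the unique ambient shortest $h$-geodesic from $p$ to $q$ already lies in $U_x$. Away from the corners this is exactly \cite[Proposition~3.6]{MMM}. At a corner $q_j$ one writes $U_x$ in geodesic polar coordinates as a wedge $\{\theta_1\le\theta\le\theta_2\}$ with $\theta_2-\theta_1<\pi$, expresses it as the intersection $\bigcap_{\theta_0\in[\theta_2,\theta_1+\pi]}H_{\theta_0}$ of half-planes, and observes (again as in \cite{MMM}) that the ambient geodesic stays in each $H_{\theta_0}$. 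Once local convexity holds, $d^\Omega$ coincides with the ambient Riemannian distance on $U_x$, so the Alexandrov comparison is inherited verbatim from Toponogov for $(\mathbb V,h)$; no triangle case analysis and no limiting argument are needed.

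Your approximation step could presumably be made rigorous, but as written it is vague (which direction of semicontinuity? why do the $d^\Omega$-minimizers of the approximating triangles converge to those of the limit triangle?), and your remark that ``angles measured inside $\Omega$ at a corner are not larger than the corresponding angles in $(\mathbb V,h)$'' is off: once local convexity is in hand the two metrics agree locally, so the angles are \emph{equal}. The half-plane intersection argument at the corners is the single clean observation that replaces all of this.
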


\begin{proof}
Let $x\in \Omega$ and let $V_x$ be a geodesically convex neighborhood in $(\mathbb V, h)$ centered at $x$. When $x$ is in the interior of $\Omega$, we choose $V_x\subset \Omega$. Let $U_x = \Omega \cap V_x$. For any $p, q\in U_x$, let $\gamma$ be the unique shortest geodesic in $V_x$ joining $p$ and $q$. Arguing as in the proof of \cite[Proposition 3.6]{MMM}, it suffices to show that $\gamma$ lies in $U_x$. The argument there works in our case when $x\notin \{ q_1, \cdots, q_a\}$. For the case $x\in \{q_1, \cdots, q_a\}$, one may assume that $V_x$ is of the form (\ref{eqn U_x in polar coordinates}) for some $r_0 >0$ and $\theta_2 - \theta_1 <\pi$. Arguing as in the proof of \cite[Proposition 3.6]{MMM}, one shows that $\gamma$ lies in $H_{\theta_0}$ for all $\theta_0 \in [\theta_2, \theta_1 + \pi] $, where in geodesic polar coordinates  
$$H_{\theta_0} =\{ (r, \theta) : 0\le r\le r_0, \theta_0-\pi \le \theta\le \theta_0 \}.$$
Since 
$$U_x = \bigcap _{\theta_0 \in (\theta_2, \theta_1 + \pi)} H_{\theta_0},$$
one concludes that $\gamma$ lies in $U_x$. 
\end{proof}

Using exactly the same proof, by approximating the boundary of $\Omega$ by geodesic polygons, one has the following generalization of \cite[Theorem 3.1]{MMM}. 

\begin{thm} \label{thm length bound on boundary of Omega}
Let $\Omega$ be as in Lemma \ref{shorest geo in Omega}. Then the boundary $\partial \Omega$ satisfies 
\begin{equation}
|\partial \Omega|:= L_h(\beta) \le \frac{2\pi }{\sqrt{\kappa}}.  
\end{equation}
\end{thm}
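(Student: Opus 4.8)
The plan is to reproduce the proof of \cite[Theorem 3.1]{MMM} in the present setting. By Proposition \ref{prop Omega domega has curvature ge kappa} we already know that $(\Omega, d^\Omega)$ is a complete length space of curvature $\ge \kappa$ in the sense of Alexandrov, and by construction $\beta = \partial \Omega$ is a closed piecewise-geodesic curve whose finitely many corners $q_0, \dots, q_{a-1}$ all have interior angle less than $\pi$ (Proposition \ref{prop interior angles < pi} together with the standing hypotheses \eqref{conditions on beta}). Thus $\beta$ is a convex geodesic polygon in $(\Omega, d^\Omega)$. In \cite[Theorem 3.1]{MMM} the bound is obtained when $\beta$ is a smooth simple closed geodesic by inscribing convex geodesic polygons, estimating their perimeters, and passing to the limit; since here $\beta$ is already such a polygon, the same estimate applies directly. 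Concretely, I would first record that all the ingredients of the \cite{MMM} argument are now available: the curvature lower bound (Proposition \ref{prop Omega domega has curvature ge kappa}), and the existence of intrinsic shortest geodesics between any two points of $\Omega$ that avoid the corners and, away from their endpoints, run through the interior (Lemma \ref{shorest geo in Omega}).

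The engine of the estimate is a Toponogov-type comparison between $(\Omega, d^\Omega)$ and the model surface $\mathbb S^2_\kappa$ of constant curvature $\kappa$, on which a great circle has length exactly $2\pi/\sqrt{\kappa}$. Using the shortest geodesics furnished by Lemma \ref{shorest geo in Omega} as the sides of comparison triangles, one compares the convex geodesic polygon $\beta$ with a corresponding convex geodesic configuration in $\mathbb S^2_\kappa$; the curvature bound forces the comparison angles to dominate the model angles, and the extremal fact that a convex closed geodesic polygon in $\mathbb S^2_\kappa$ has perimeter at most that of a great circle then yields
\begin{equation*}
|\partial \Omega| = L_h(\beta) \le \frac{2\pi}{\sqrt{\kappa}}.
\end{equation*}
The convexity of the corners, i.e. that each interior angle is less than $\pi$, is exactly what guarantees that the inscribed and comparison polygons remain convex and that the comparison sides do not degenerate at the vertices.

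\textbf{Main obstacle.} The crux is the comparison estimate itself for a polygon with genuine corners, rather than its formal transplantation. Two points need care. First, one must verify that the shortest paths used to build the comparison triangles never pass through a corner $q_j$ and otherwise lie in the interior of $\Omega$; this is precisely the content of parts (i)--(iii) of Lemma \ref{shorest geo in Omega}, and the angle condition $\theta_j < \pi$ is what makes it hold. Second, one must pass correctly between the intrinsic Alexandrov picture (curvature $\ge \kappa$, hence diameter $\le \pi/\sqrt{\kappa}$ by the generalized Bonnet--Myers theorem) and the ambient length $L_h(\beta)$, and, if one proceeds by inscribing finer geodesic polygons and taking a limit as in \cite{MMM}, check that the curvature lower bound holds uniformly for the approximating subdomains and that their perimeters converge to $L_h(\beta)$. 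Once these are in place, the inequality follows exactly as in \cite[Theorem 3.1]{MMM}.
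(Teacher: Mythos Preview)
Your proposal is correct and follows essentially the same route as the paper: the paper simply records that ``using exactly the same proof, by approximating the boundary of $\Omega$ by geodesic polygons,'' the argument of \cite[Theorem 3.1]{MMM} carries over verbatim once Lemma \ref{shorest geo in Omega} and Proposition \ref{prop Omega domega has curvature ge kappa} are in place. Your write-up spells out precisely those ingredients and the Toponogov comparison step, so there is nothing to add.
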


Now we are ready to prove the main theorem in this section. 

\begin{thm} \label{thm length bounds for immersed loop with k self intersections}
Let $\sigma$ be an immersed geodesic loop in $(\mathbb V, h)$ with $k$ self-intersection points counted with multiplicity. Then the length $L_h(\sigma)$ of $\sigma$ satisfies
\begin{equation}
L_h(\sigma) \le (k+1) \frac{2\pi }{\sqrt{\kappa}}. 
\end{equation}
\end{thm}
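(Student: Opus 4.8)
The plan is to view $\sigma$ as the $1$-skeleton of a plane graph whose bounded faces are exactly the domains enclosed by $\sigma$, to bound the perimeter of each such domain using Theorem \ref{thm length bound on boundary of Omega}, and then to recover $L_h(\sigma)$ from these perimeters by a double-counting argument over the arcs of $\sigma$.

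First I would note that, $\sigma$ being a geodesic, uniqueness of solutions to the geodesic ODE forces every self-intersection to be transverse, so Proposition \ref{prop gamma bounds k+1 domains} applies and $\sigma$ encloses exactly $k+1$ domains $U_1,\dots,U_{k+1}$. For each $j$ put $\Omega_j=\overline{U_j}$; its boundary is assembled from sub-arcs of the geodesic $\sigma$, hence is piece-wise geodesic, and by Proposition \ref{prop interior angles < pi} all of its interior angles are $<\pi$, so $\Omega_j$ satisfies \eqref{conditions on beta}. Theorem \ref{thm length bound on boundary of Omega} then yields
\begin{equation*}
|\partial\Omega_j|\le \frac{2\pi}{\sqrt{\kappa}},\qquad j=1,\dots,k+1.
\end{equation*}

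Next I would carry out the double counting. Let $\alpha_1,\dots,\alpha_e$ be the sub-arcs into which the self-intersection points cut $\operatorname{Image}(\sigma)$, so that $L_h(\sigma)=\sum_{a}L_h(\alpha_a)$, and let $n_a$ be the number of domains having $\alpha_a$ in their boundary, so that $\sum_j|\partial\Omega_j|=\sum_a n_a\, L_h(\alpha_a)$. The point is to show $n_a\ge 1$ for every $a$. Regard $\operatorname{Image}(\sigma)$ as a connected plane graph with vertices the self-intersection points $p_i$ and edges the arcs $\alpha_a$; each $p_i$ has even degree $2s_i$. A connected graph in which every vertex has even degree has no bridge, since deleting a bridge would leave exactly one odd-degree vertex in one of the two resulting components, violating the handshake lemma. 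Hence every edge $\alpha_a$ borders two distinct faces, and as $\mathbb V\cong\mathbb R^2$ has a single unbounded face, at least one of these two faces is a domain; thus $n_a\ge1$. Combining with the perimeter bounds,
\begin{equation*}
L_h(\sigma)=\sum_a L_h(\alpha_a)\le \sum_a n_a\, L_h(\alpha_a)=\sum_{j=1}^{k+1}|\partial\Omega_j|\le (k+1)\frac{2\pi}{\sqrt{\kappa}},
\end{equation*}
which is the desired bound.

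The main obstacle is the use of Theorem \ref{thm length bound on boundary of Omega} when a domain $\Omega_j$ is pinched at a self-intersection point: there its boundary occupies two opposite sectors and the parametrizing curve $\beta_j$ visits that vertex twice, so $\beta_j$ fails to be injective, whereas Theorem \ref{thm length bound on boundary of Omega} was stated for injective $\beta$. Such a $\beta_j$ nonetheless satisfies \eqref{conditions on beta}, and the perimeter estimate depends only on the local data that the boundary pieces are geodesics meeting at interior angles $<\pi$ — precisely the input that makes $(\Omega_j,d^{\Omega_j})$ a complete length space of curvature $\ge\kappa$ via Lemma \ref{shorest geo in Omega} and Proposition \ref{prop Omega domega has curvature ge kappa}. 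I would therefore check that the geodesic-polygon approximation behind Theorem \ref{thm length bound on boundary of Omega} applies verbatim to these non-simple boundaries, so that $|\partial\Omega_j|\le 2\pi/\sqrt{\kappa}$ holds for every domain irrespective of pinching.
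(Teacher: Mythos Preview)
Your overall strategy matches the paper's: invoke Proposition~\ref{prop gamma bounds k+1 domains} to obtain $k+1$ enclosed domains, bound each $|\partial\Omega_j|$ by $2\pi/\sqrt{\kappa}$, and sum. You also correctly isolate the one genuine obstacle, namely that $\partial\Omega_j$ may be pinched (the parametrizing curve $\beta_j$ non-injective), so that Theorem~\ref{thm length bound on boundary of Omega} does not apply directly.

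Where your proposal diverges is in how you resolve this obstacle. You suggest checking that Lemma~\ref{shorest geo in Omega} and Proposition~\ref{prop Omega domega has curvature ge kappa} extend ``verbatim'' to non-injective $\beta$; but the proofs of those results explicitly use that $B_\epsilon(q_j)\cap\Omega$ is a \emph{single} sector of angle $<\pi$, which fails at a pinch point. The paper instead sidesteps the issue by an explicit truncation: at each vertex $q$ visited $s$ times by $\beta_j$, it removes from $\Omega_j$ the $s$ small geodesic triangles with apex $q$ and opposite side the short geodesic chord across each of the $s$ sectors of $\Omega_j\cap B_\epsilon(q)$. The truncated domain $\Omega_{j,\epsilon}$ has injective boundary satisfying \eqref{conditions on beta}, so Theorem~\ref{thm length bound on boundary of Omega} applies as stated, and $\epsilon\to 0$ gives $|\partial\Omega_j|\le 2\pi/\sqrt{\kappa}$. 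This is more direct than re-proving the Alexandrov-geometry lemmas in the pinched setting.

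On the other hand, your double-counting step---using that the image of $\sigma$ is an Eulerian plane graph, hence bridgeless, so every arc borders at least one bounded face---is a genuine justification of the inequality $L_h(\sigma)\le\sum_j|\partial\Omega_j|$, which the paper simply writes down without comment.
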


\begin{proof}
By Proposition \ref{prop gamma bounds k+1 domains}, the immersed geodesic loop $\sigma$ encloses $k+1$ domains, denote the closure of these domains by $\Omega_1, \cdots, \Omega_{k+1}$. Fix each $i=1, \cdots, k+1$ and write $\Omega = \Omega_i$ for simplicity. The boundary $\partial \Omega$ of $\Omega$ is parametrized by a closed curve $\beta : \mathbb S^1 \to (\mathbb V, h)$ consisting of piece-wise smooth geodesic arcs which satisfies (\ref{conditions on beta}), but which may not be injective. Let $q$ be any vertex of $\partial \Omega$ (i.e. $q=\beta(t_j)$ for some $j=0, \cdots, a$). Let $s= |\beta^{-1}(q)|$. For any small $\epsilon >0$, let $B= B_\epsilon(q)$ be the closed geodesic ball in $(\mathbb V, h)$ centered at $q$ with radius $\epsilon$. Then $\beta$ intersects $\partial B$ at $2s$ points, given by $\exp_q(\epsilon e^{i \theta_0}), \cdots, \exp_q(\epsilon e^{i \theta_{2s-1}})$ in geodesic polar coordinates $(r, \theta)$ centered at $q$. Moreover, we have 
\begin{equation*}
\Omega \cap B = \{ \exp_q(r e^{i\theta}):  0\le r\le \epsilon, \ \  \theta_{2m}\le \theta \le \theta_{2m+1}, m=0, \cdots, s-1\}. 
\end{equation*}
For each $m$, let $\beta_m$ be the shortest geodesic in $B$ joining $\exp_q (\epsilon e^{i\theta_{2m}})$ and $\exp_q (\epsilon e^{i\theta_{2m+1}})$. Arguing as in the proof of Proposition \ref{prop Omega domega has curvature ge kappa}, $\beta_m$ also lies inside $\Omega$. Let $\Delta_m$ be the geodesic triangle in $\Omega$ with vertices $q$, $\exp_q (\epsilon e^{i\theta_{2m}})$ and $\exp_q(\epsilon e^{i\theta_{2m+1}})$. Let $D_\epsilon$ be the collection of all such triangles constructed for all vertices of $\partial \Omega$. Let $\Omega_\epsilon = \Omega \setminus D_\epsilon$. Note that the interior of $\Omega$ is homeomorphic to the interior of $\Omega_\epsilon$. Hence $\Omega_\epsilon$ is connected. Moreover, the boundary $\partial \Omega_\epsilon$ is parametrized by an injective curve consisting of piece-wise smooth geodesic arcs which satisfies (\ref{conditions on beta}). Thus we can apply Theorem \ref{thm length bound on boundary of Omega} to conclude 
\begin{equation*}
|\partial \Omega_\epsilon| \le \frac{2\pi}{\sqrt{\kappa}}. 
\end{equation*}
Taking $\epsilon \to 0$, we obtain 
\begin{equation*}
|\partial \Omega| \le \frac{2\pi}{\sqrt{\kappa}}
\end{equation*}
and hence 
\begin{equation*}
L_h(\sigma) \le \sum_{i=1}^{k+1} |\partial \Omega_i| \le (k+1) \frac{2\pi}{\sqrt{\kappa}}. 
\end{equation*}
\end{proof}

\section{Entropy bounds for immersed non-spherical closed self-shrinkers}
We recall that for any immersed curve $\sigma : I \to \mathbb H$, where $\mathbb H$ is the upper half space, the immersed rotationally symmetric hypersurface $\Sigma_\sigma$ in $\mathbb R^{n+1}$ with profile curve $\sigma (s)= (x(s), r(s))$ is given by 
\begin{equation*}
\Sigma_\sigma := \{ (x(s), r(s) \omega) : s\in I, \ \omega\in \mathbb S^n\}. 
\end{equation*}
Now we prove Theorem \ref{thm entropy bounds for immersed rotationally symmetric self-shrinkers}.

\begin{proof}[Proof of Theorem \ref{thm entropy bounds for immersed rotationally symmetric self-shrinkers}] By \cite[Proposition 2.3]{MMM}, $\Sigma_\sigma$ is a self-shrinker if and only if $\sigma$ is a geodesic in $(\mathbb H, g_A)$, where $g_A$ is given by 
\begin{align}\label{dfn of ang metric}
g_A = r^{2(n-1)} e^{-\frac{x^2 + r^2}{2}} (dx^2 + dr^2).
\end{align} 
Since $\Sigma_\sigma$ is compact, either $I = \mathbb S^1$ and $\Sigma_\sigma$ is an immersion from $\mathbb S^1 \times \mathbb S^{n-1}$. Or $I = (a, b)$, $\sigma (a^+), \sigma(b^-)$ lie in $\partial \mathbb H$ and $\Sigma_\sigma$ is an immersion from $\mathbb S^n$. Since $\Sigma_\sigma$ is non-spherical, the latter case is excluded and thus $\sigma$ is a closed immersed geodesic. By Theorem \ref{thm length bounds for immersed loop with k self intersections}, the length of $\sigma$ satisfies 
\begin{equation*}
L(\sigma) \le (k+1) \frac{2\pi}{\sqrt {\kappa_n}},
\end{equation*}
where $\kappa_n$ is the lower bound of the Gaussian curvature of $g_A$ computed in \cite[section 3]{MMM}. Together with
\begin{align} \label{relation between entropy and length}
 \lambda (\Sigma_\sigma) = (4\pi)^{-n/2} \omega_{n-1} L_A(\sigma)
\end{align}
and the numbers $E_n$ from \cite[equation (3.1)]{MMM}, given by
\begin{align*}
    E_n = \frac{2\pi \omega_{n-1}}{(4\pi)^{n/2} \sqrt{\kappa_n}}
\end{align*}
we obtain (\ref{eqn entropy bound for immersed rotationally symmetric self-shrinkers}). 
\end{proof}

\begin{rem}\label{remark immersed f-ivariant shrinkers}
    As in the proof of Theorem \ref{thm entropy bounds for immersed rotationally symmetric self-shrinkers}, one can use Theorem \ref{thm length bounds for immersed loop with k self intersections} to prove an entropy upper bound for the class of $f$-invariant closed self-shrinkers $N$ such that $f(N)$ is a closed immersed geodesic with $k$ self-intersections in $(0, \infty) \times (0, \pi/g)$ with respect to the metric $h$ given in \eqref{eqn definition of h expression}. However, as of now no such examples are known to the authors' knowledge.
\end{rem}

\bibliographystyle{abbrv}

\vspace{0.75cm}

    \noindent John Man Shun Ma \\
    Department of Mathematics, Southern University of Science and Technology \\
    No 1088, xueyuan Rd., Xili, Nanshan District, Shenzhen, Guangdong, China 518055 \\
    email: hunm@sustech.edu.cn

\bigskip

    \noindent Ali Muhammad \\
    Department of Mathematical Sciences, University of Copenhagen \\
    Universitetsparken 5, DK-2100 Copenhagen \O, Denmark\\
    email: ajhm@math.ku.dk

\end{document}